\documentclass[11pt]{article}
\usepackage[top=1in, bottom=1in, left=1.25in, right=1.25in]{geometry}
\usepackage{amsmath,amssymb,amsthm,xcolor}
\newtheorem{theorem}{Theorem}[section]
\newtheorem{corollary}[theorem]{Corollary}

\newtheorem{lemma}[theorem]{Lemma}
\newtheorem{example}[theorem]{Example}
\newtheorem{remark}[theorem]{Remark}
\newcommand{\MB}[1]{\ensuremath{\mathbf{#1}}}
\newcommand{\MC}[1]{\ensuremath{\mathcal{#1}}}

\newcommand{\B}{\ensuremath{\mathcal{B}}}
\title{An Explication of Optimal Equidistant Codes\footnote{This paper is based on the talk entitled ``Optimal Equidistant Codes--A Detective Story'' that I presented at the 2026 Summer Canadian Mathematical Society meeting on June 7, 2026.}}
\author{Douglas R.\ Stinson\\David R.\ Cheriton School of Computer Science\\
University of Waterloo\\ Waterloo ON, N2L 3G1, Canada}

\begin{document}
\maketitle

\begin{abstract}
We discuss the problem of characterizing equidistant binary codes of a given length $n$ having largest possible distance and the maximum number of codewords. Such characterizations have been studied by several authors over the years and they involve symmetric BIBDs with certain parameters. In this primarily expository paper, we investigate the history of this problem and give a unified presentation of the main results. Perhaps surprisingly, researchers on this problem were unaware of early relevant work by Marrero and Butson \cite{MB} from 1973. Also, it turns out that published results on characterizations of equidistant binary codes have missed one of the possible subcases when $n \equiv 2 \bmod 4$.
\end{abstract}

\section{Introduction}
\label{intro.sec}

An \emph{equidistant code} $E(n,d,m)$ consists of $m$ binary codewords of length $n$, such that the hamming distance between any two distinct codewords is exactly $d$. An \emph{equidistant constant-weight code} $EC(n,d,w,m)$ consists of $m$ binary codewords of length $n$ and weight $w$, such that the distance between any two distinct codewords is exactly $d$.

Equidistant codes 
have been studied at least since the late 1960's. The 1968 paper by Semakov and Zinoviev \cite{Semakov-Zinoviev} is perhaps the first one to study equidistant codes over a $q$-ary alphabet.\footnote{Some work done on $q$-ary equidistant codes will be mentioned in Section \ref{related}.}
 Among other things, this paper proves some equivalences involving BIBDs. 

One basic problem involving equidistant binary codes is to maximize $m$ as a function of $d$ and $n$. In this note, we discuss a more specialized problem, which can be described in three steps; see Figure \ref{fig1}. The codes 
$E(n,d^*,m^*)$ will be termed \emph{optimal} equidistant binary codes.
Much work on this problem has been done, an almost complete solution is found in Ionin and Shrikhande \cite{IS95}.
In this note, we will provide a somewhat simplified solution, based on \cite{IS95,MB,SvR,vanLint84} and other related papers. Some other relevant references on equidistant codes include \cite{FKLW,Heg,IS95,JHall,SvR,vanLint73,vanLint84}.

\begin{figure}[tb]
\begin{center}
\begin{tabular}{|c|p{4.5in}|}
\hline
step 1 & Given $n$ (the length of the codewords), determine the maximum $m$ such an $E(n,d,m)$ exists for some $d$.
Denote this maximum value of $m$ (which of course is a function of $n$) by $m^*$.\\ \hline
step 2 & Determine the maximum $d$ such that an $E(n,d,m^*)$ exists. Denote this maximum value of $d$ (which is also  a function of $n$) by $d^*$.\\ \hline
step 3 &  Give a strong characterization of  $E(n,d^*,m^*)$. \\ \hline
\end{tabular}\end{center}
\caption{Optimal Equidistant Binary Codes}
\label{fig1}
\end{figure}


The first two steps of this three-step process are fairly simple. Step 1 in Figure \ref{fig1} is basically an application of a well-known classical Fisher-type inequality for $(r,\lambda)$-designs, which has been known since 1966. We obtain the following (see Section \ref{step1.sec} for the proof).

\begin{theorem}
\label{step1}
If $n \equiv 3 \bmod 4$, then $m^* \leq n+1$.
If $n \not\equiv 3 \bmod 4$, then $m^* \leq n$. 
\end{theorem}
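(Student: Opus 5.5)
We translate the code into an $(r,\lambda)$-design and apply the Fisher-type inequality, then handle the case $n \equiv 3 \bmod 4$ separately.

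\emph{Reduction to a design.} Let $\MC{C}$ be an $E(n,d,m)$ with $m \geq 2$. Translating every codeword by a fixed codeword (XOR) preserves all distances, so we may assume the zero word is in $\MC{C}$. Then the other $m-1$ codewords all have weight $d$. Any two of them, $x$ and $y$, satisfy $d = 2d - 2|x\cap y|$, so $|x \cap y| = d/2$. In particular $d$ is even, say $d = 2\lambda$.

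Now take the $n$ coordinate positions as ``blocks'' and the $m-1$ nonzero codewords as ``points''. Each point lies in exactly $d$ blocks, and each pair of points lies in exactly $\lambda = d/2$ common blocks. Since $\lambda < d$, this is a nontrivial $(r,\lambda)$-design with $v = m-1$ points and $b = n$ blocks. The classical Fisher-type inequality for $(r,\lambda)$-designs (Stanton--Mullin, 1966) gives $b \geq v$, so $m-1 \leq n$ and $m \leq n+1$. The proof of that inequality is the standard Gram-matrix argument: $MM^T = (r-\lambda)I + \lambda J$ is nonsingular when $r > \lambda$, so $M$ has rank $v \leq b$.

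\emph{The equality case $m = n+1$.} Suppose $m = n+1$, so $v = b = n$. The same Gram-matrix argument applied to the incidence matrix of the code shows $M$ is square and invertible. Since $MJ = JM$ often fails for $(r,\lambda)$-designs, we instead argue directly: $M^T$ has the same row space, and equality in Fisher-type results forces a symmetric structure. For this the cleanest route is to use all $n+1$ codewords, including $0$, symmetrically.

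Form the $(n+1)\times(n+1)$ $\pm1$ matrix $H$ whose rows are the codewords mapped by $0\mapsto 1$, $1 \mapsto -1$, with an extra column of all $1$'s appended. The inner product of two distinct rows is $(n+1) - 2d$. To obtain the bound we need this to be $0$. So consider the Gram matrix $G = HH^T = (n+1-\mu)I + \mu J$ with $\mu = n+1-2d$.

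Alternatively, and more simply, use the Plotkin-type sum. Summing the pairwise distances column by column gives
$\binom{m}{2}d \leq n\lfloor m^2/4\rfloor$. We combine this with $d = 2\lambda$ and the $(r,\lambda)$ equality structure. When $v = b$, the equality case of the Stanton--Mullin inequality says the design is a symmetric BIBD, or its degenerate complement, so every block has the same size $k$. A symmetric $(v,k,\lambda)$-BIBD with $v = n$, $k = r = d = 2\lambda$ satisfies $k(k-1) = \lambda(v-1)$. This gives $2\lambda(2\lambda-1) = \lambda(n-1)$, hence $n = 4\lambda - 1 \equiv 3 \bmod 4$. So $m = n+1$ forces $n \equiv 3 \bmod 4$, and otherwise $m \leq n$.

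\emph{Main obstacle.} The delicate step is the equality analysis: showing that $v=b$ in a nontrivial $(r,\lambda)$-design forces constant block size, i.e.\ a symmetric BIBD. I would do this as follows. Write $MM^T = (r-\lambda)I + \lambda J$ with $M$ square and invertible. Then $M^TM = M^{-1}\bigl((r-\lambda)I+\lambda J\bigr)M$. Using $JM = $ (column sums) and comparing traces and the $J$-eigenvector, we get that the column sums $k_j$ satisfy $\sum_j k_j = vr$ and $\sum_j k_j^2$ is determined. The Cauchy--Schwarz argument of Stanton--Mullin/Woodall then shows all $k_j$ are equal, except for a degenerate case, which I would rule out or handle separately using the fact that each point has degree $r = 2\lambda$.
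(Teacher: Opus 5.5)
Your reduction and first bound match the paper's proof. You translate so that $\mathbf{0}$ is a codeword, pass to the $(r,\lambda)$-design with $r=d=2\lambda$ on $v=m-1$ points and $b=n$ blocks, and apply the Fisher-type inequality to get $m\le n+1$. Your conclusion in the square case, $2\lambda(2\lambda-1)=\lambda(n-1)$ and hence $n=4\lambda-1$, is also the paper's. The paper obtains the needed fact, that a square $(r,\lambda)$-design is a symmetric BIBD, by citing Ryser (Theorem~\ref{Ryser}); if you cite it as well, you are done.

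However, the argument you sketch for that step does not work. The identities $\sum_j k_j=vr$ and $\sum_j k_j^2=\mathbf{1}^TMM^T\mathbf{1}=v(r-\lambda)+\lambda v^2$, combined with Cauchy--Schwarz, give only $r(r-1)\le\lambda(v-1)$. With $r=2\lambda$ and $v=n$ this says $n\ge 4\lambda-1$, which is just the Plotkin bound $d\le(n+1)/2$. That bound holds for every residue of $n$ modulo $4$; for instance $n=4$, $d=2$ satisfies it, yet no $E(4,2,5)$ exists. What you need is \emph{equality} in Cauchy--Schwarz, and nothing in your sketch forces it. Your Hadamard paragraph has a worse defect: requiring the inner products $n+1-2d$ to vanish simply assumes the conclusion.

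The missing idea is to push the constant \emph{row} sums through the inverse. Every point has replication $r$, so $MJ=rJ$. Since $M$ is square and $MM^T=(r-\lambda)I+\lambda J$ is nonsingular, $M^{-1}J=r^{-1}J$, and therefore
\[ M^TJ=M^{-1}(MM^T)J=\frac{r-\lambda+\lambda v}{r}\,J . \]
Thus all block sizes equal a constant $k$. Double counting gives $k=r$, so $r^2=r-\lambda+\lambda v$. This is exactly the equality you were missing, and it yields $n=4\lambda-1\equiv 3 \bmod 4$.

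There is also no ``degenerate complement'' case to rule out. The non-symmetric square configurations you may have in mind are Ryser's $\lambda$-designs, e.g.\ those built from symmetric designs by complementation. They have constant pairwise intersections but unequal set sizes, i.e.\ a non-constant diagonal in $MM^T$. Here every nonzero codeword has weight $d$, so that situation cannot arise.
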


Step 2 in  Figure \ref{fig1} can be solved as an easy application of the 1960 Plotkin Bound for binary codes (see Section \ref{step2.sec}). 

\begin{theorem}
\label{step2}
Suppose $n \geq 3$.  If $n \equiv 3 \bmod 4$, then $d^* \leq (n+1)/2$. If $n \equiv r \bmod 4$, where $0\leq r \leq 2$, then 
$d^* \leq (n-r)/2$.
\end{theorem}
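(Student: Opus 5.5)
The plan is to combine a double-counting (Plotkin-type) argument with a parity observation. We also need to know which values $m^*$ can take.

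First, $m^*\ge n$: the $n$ words of weight one form an $E(n,2,n)$. Together with Theorem~\ref{step1}, this gives $m^*=n$ when $n\not\equiv 3 \bmod 4$, and $m^*\in\{n,n+1\}$ when $n\equiv 3\bmod 4$. So it suffices to bound $d$ for an $E(n,d,m)$ with $m\in\{n,n+1\}$, where $m=n+1$ occurs only if $n\equiv 3 \bmod 4$. Since $n\ge 3$, we have $m\geq 3$.

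\textbf{Step 1 (Plotkin count).} Sum the distances over all unordered pairs of codewords. The total is $\binom{m}{2}d$. Coordinate by coordinate, if $a_i$ codewords have a $1$ in coordinate $i$, that coordinate contributes $a_i(m-a_i)\le\lfloor m/2\rfloor\lceil m/2\rceil$. Hence
\[ \binom{m}{2} d \le n\left\lfloor \frac{m}{2}\right\rfloor \left\lceil \frac{m}{2}\right\rceil. \]
This gives $d\le nm/(2(m-1))$ for $m$ even and $d\le n(m+1)/(2m)$ for $m$ odd. Substituting:
\begin{itemize}
\item $m=n+1$ with $n$ odd gives $d\le (n+1)/2$.
\item $m=n$ with $n$ odd gives $d\le (n+1)/2$.
\item $m=n$ with $n$ even gives $d\le n/2+n/(2(n-1))$. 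Since $n/(2(n-1))<1$ for $n\ge 3$ and $d$ is an integer, this yields $d\le n/2$.
\end{itemize}

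\textbf{Step 2 (parity).} For three distinct codewords $x,y,z$ we have $\mathrm{wt}(x+z)\equiv \mathrm{wt}(x+y)+\mathrm{wt}(y+z)\bmod 2$. Therefore $d\equiv 2d\equiv 0 \bmod 2$, so $d$ is even whenever $m\ge 3$.

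\textbf{Step 3 (cases).} Now round each Step~1 bound down to an even integer, according to $n \bmod 4$:
\begin{itemize}
\item $n\equiv 0$: $n/2$ is even, so $d\le n/2$.
\item $n\equiv 2$: $n/2$ is odd, so $d\le (n-2)/2$.
\item $n\equiv 1$: $(n+1)/2$ is odd, so $d\le (n-1)/2$.
\item $n\equiv 3$: $(n+1)/2$ is even, so $d\le (n+1)/2$. This holds whether $m^*=n$ or $m^*=n+1$.
\end{itemize}
These are exactly the bounds claimed in the theorem.

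The only delicate point is that Theorem~\ref{step1} gives merely an upper bound on $m^*$. So we must make sure every possible value of $m^*$ is covered; the weight-one construction handles this. The strictness of $n/(2(n-1))<1$ is where the hypothesis $n\ge 3$ is used, together with $m\ge 3$ for the parity step.
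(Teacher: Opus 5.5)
Your proof is correct. For $n\equiv 0,1,2 \bmod 4$ its core is the same as the paper's: a Plotkin-type bound combined with evenness of $d$. The differences are in the details, and they favor your version on rigor.

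You derive the Plotkin count inline, with the $\lfloor m/2\rfloor\lceil m/2\rceil$ refinement, and apply it directly at $m=n$ or $m=n+1$. The paper instead quotes Theorem~\ref{plotkin.thm} and argues by contradiction from $d\ge 2t+2$. You get evenness from the parity identity $\mathrm{wt}(x+z)\equiv \mathrm{wt}(x+y)+\mathrm{wt}(y+z) \pmod 2$, whereas the paper reads it off the constant intersection $d/2$ in Lemma~\ref{setsystem.lem}.

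More substantively, the paper writes ``$m=n$'' for $n\not\equiv 3 \bmod 4$, although Theorem~\ref{step1} only gives $m^*\le n$. Your weight-one code $E(n,2,n)$ supplies the missing lower bound $m^*\ge n$, without which the Plotkin argument would not bound $d^*$ at all.

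For $n\equiv 3 \bmod 4$, the paper defers to Theorem~\ref{3mod4}. That argument (Fisher plus Ryser forcing an SBIBD, hence $n=2d-1$) covers only the situation $m^*=n+1$, which holds exactly when a Hadamard matrix of order $n+1$ exists. Your uniform count also handles the possibility $m^*=n$, giving $d\le (n+1)/2$ there as well, so your bound is unconditional.

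The paper's route buys more information in the case $m^*=n+1$: it pins $d$ down exactly as $(n+1)/2$ and identifies the code with a Hadamard matrix, which is what Theorem~\ref{step3} needs anyway.
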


The third step in  Figure \ref{fig1} is more complicated; we will discuss it in Section \ref{step3.sec}.
The following theorem summarizes the combinatorial characterizations of optimal equidistant codes that we will prove.

\begin{theorem}
\label{step3}
\mbox{\quad} 
\begin{enumerate}
\item For $n \equiv 3 \bmod 4$, an $E(n,(n+1)/2,n+1)$ is equivalent to a Hadamard matrix of order $n+1$.
\item For $n \equiv 0 \bmod 4$, an  $E(n,n/2,n)$ is equivalent to a Hadamard matrix of order $n$.
\item For $n \equiv 1 \bmod 4$, an $E(n,(n-1)/2,n)$ is equivalent to a 
a $(2u^2+2u+1, u^2, (u^2 -u)/2)$-SBIBD, where $n = 2u^2+2u+1$.
\item For $n \equiv 2 \bmod 4$, an $E(n,(n-2)/2,n)$ is equivalent to a 
$(12u^2+8u+2, 6u^2 + u, 3u^2 -u)$-SBIBD  where $n = 12u^2 + 8u + 2$, or
a $(12u^2 - 8u + 2,6u^2 -7u+2,3u^2 - 5u+2)$-SBIBD where $n= 12u^2 - 8u + 2$
\end{enumerate}
\end{theorem}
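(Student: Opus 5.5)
Throughout, I translate the code into a design and back. Given an $E(n,d,m)$, translate it (XOR every codeword with a fixed codeword) so that one codeword is $\mathbf{0}$. The remaining $m-1$ codewords then all have weight $d$. Any two of them are at distance $d$, so their supports meet in exactly $d/2$ points. Viewing coordinates as blocks and codewords as points (or the reverse), this is exactly the setting of the $(r,\lambda)$-design argument behind Theorem~\ref{step1}, with Gram matrix $(d/2)J + (d/2)I$ on the nonzero codewords.

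The general strategy is to compute, in each case, the incidence matrix $M$ of size $(m-1)\times n$ with $MM^T = \frac d2 (I+J)$. I then force equality-type conditions that pin down the column sums and show $M$, possibly after adjoining or deleting a column, is the incidence matrix of a symmetric design. The converse is direct: from the design or Hadamard matrix, form the rows (plus the zero word) and check the distances.

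\textbf{Cases 1 and 2.} For $n\equiv 3\bmod 4$ with $m=n+1$ and $d=(n+1)/2$, I would map codewords to $\pm1$ vectors and prepend a coordinate equal to $+1$. The $n+1$ vectors of length $n+1$ then have pairwise inner product $(n+1)-2d+\dots$. More precisely, the inner product on the original coordinates is $n-2d=-1$, and the extra coordinate contributes $+1$, so the vectors are orthogonal. This gives a Hadamard matrix, and the construction is clearly reversible after normalizing. For $n\equiv 0\bmod 4$ with $m=n$ and $d=n/2$, the $\pm1$ images already have inner products $n-2d=0$, so the $n$ codewords form an $n\times n$ Hadamard matrix directly.

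\textbf{Cases 3 and 4.} These are the real work. Take $m=n$, and let the $n-1$ nonzero codewords have weight $d$ and pairwise intersections $d/2$. Let $r_j$ be the number of these codewords containing coordinate $j$. Counting gives $\sum r_j=(n-1)d$ and $\sum r_j(r_j-1)=(n-1)(n-2)d/2$. Hence the variance of the $r_j$ is determined, and I would show it forces the $r_j$ to take few values. The cleanest route I see is to complement codewords or add the all-ones vector so that the structure becomes a symmetric design on $n$ points and $n$ blocks.

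For $n\equiv1$ and $d=(n-1)/2$, I expect all $r_j$ to be equal after suitable complementation. This yields an SBIBD with $k=u^2$, $\lambda=(u^2-u)/2$, and $n=2u^2+2u+1$. The condition that $k-\lambda$ is a square, together with $\lambda(v-1)=k(k-1)$, forces this parametrization. For $n\equiv2$, solving the analogous Diophantine system (integrality of $r_j$ and $\lambda(v-1)=k(k-1)$ together with $n-2d=2$) gives a Pell-like quadratic. I expect it to split into the two families $n=12u^2\pm8u+2$.

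The main obstacle is exactly this step: proving that the column sums $r_j$ are constant, or take values allowing a uniform completion, and then solving the resulting quadratic Diophantine equation completely. The abstract warns that one subcase is easy to miss, so I would carefully enumerate both sign choices of the square root.
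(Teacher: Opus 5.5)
\textbf{Cases 1 and 2.} These are fine. For $n\equiv 3 \bmod 4$, your bordering argument is more direct than the paper's route through Ryser's theorem and the $(4t-1,2t,t)$-SBIBD.

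\textbf{The gap.} It is in cases 3 and 4, exactly at the step you call the main obstacle: the hard direction, optimal code $\Rightarrow$ SBIBD, is never proved. The paper gets it from the Marrero--Butson theorem on pseudo $(v,2\lambda,\lambda)$-designs with $v\neq 4\lambda$. You neither invoke that result nor replace it. Your counting fixes only $\sum_j r_j$ and $\sum_j r_j^2$, and two moments cannot force the $r_j$ to take two values; you need a statement about each coordinate separately. Also, complementing codewords (or adding $\mathbf 1$ to all of them) is not the right operation. What you need is a vector $\mathbf a$ at the same distance from all $n$ codewords. Translating the code by $\mathbf a$ (complementing the coordinates in its support) then gives an $n\times n$ $0/1$ matrix with constant row weight and constant row intersections, i.e., an SBIBD.

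\textbf{One way to close it.} Let $H$ be the $n\times n$ $\pm1$ matrix whose rows are all $n$ codewords, including $\mathbf 0$, using $0\mapsto 1$ and $1\mapsto -1$. Then $HH^T=G:=(n-\delta)I+\delta J$ with $\delta=n-2d\in\{1,2\}$. Since $G$ is nonsingular, $H^{-1}=H^TG^{-1}$, so $H^TG^{-1}H=I$. Equivalently, $H^TH=(n-\delta)I+\frac{\delta}{(1+\delta)n-\delta}\,\mathbf y\mathbf y^T$, where $\mathbf y=H^T\mathbf 1$ is the vector of column sums. Every diagonal entry of $H^TH$ equals $n$. Using $\delta\neq 0$ (this is where $v\neq 4\lambda$ enters), this gives $y_j^2=(1+\delta)n-\delta=:D^2$ for every $j$. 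Since $y_j=n-2r_j$, it says $r_j\in\{(n-D)/2,(n+D)/2\}$. Now complement the coordinates with $r_j=(n+D)/2$. All column sums become $(n-D)/2$, and, because $H\mathbf y=G\mathbf 1=D^2\mathbf 1$, all row weights become $(n-D)/2$ as well. This yields the $(n,(n-D)/2,(n-D)/2-d/2)$-SBIBD, with $D^2=2n-1$ or $3n-2$ exactly as in the paper.

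\textbf{Smaller corrections.}
\begin{itemize}
\item The parametrizations come just from $2n-1$ (resp.\ $3n-2$) being a perfect square, with $D\equiv\pm 2 \bmod 6$ giving the two families in case 4. There is no Pell equation here.
\item The claim that $k-\lambda$ must be a square is false in case 3: the $(13,4,1)$-SBIBD has $k-\lambda=3$. In case 4 it is only the extra Bruck--Ryser--Chowla existence condition, not part of the equivalence.
\item In the converse for cases 3 and 4, take the rows of the SBIBD incidence matrix themselves. Adjoining a zero word would destroy equidistance.
\end{itemize}
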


\begin{remark}
{\rm 
Different  parts of Theorem \ref{step3} were proven independently in various papers, often using different mathematical language.
For example, part 1 was shown in \cite{Cordes}, part 2 is proven in \cite{MRS} and
part 3 was proven in \cite{SvR,vanLint84}.
The most complete statement of these results was given by Ionin and Shrikhande \cite[Theorem 4.2]{IS95}. However, the second subcase of part 4 is not included in \cite[Theorem 4.2]{IS95}; this is apparently an oversight. We will discuss the history of these results in more detail later in the paper.}
\end{remark}

\section{Preliminaries}

In the rest of this section, we present some well-known equivalent formulations of equidistant codes.
The following result  has been used by many authors.

\begin{lemma}[folklore]
\label{weight.thm}
There exists an $E(n,d,m+1)$ if and only if there exists an
$EC(n,d,d,m)$.
\end{lemma}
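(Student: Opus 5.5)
The plan is to prove both directions by translating codewords using coordinatewise XOR (addition over $\mathbb{F}_2$). The key fact is that translating every codeword by a fixed vector preserves all pairwise Hamming distances: $d(x\oplus z,y\oplus z)=d(x,y)$. Also, the weight of a word is its distance to the zero word: $\mathrm{wt}(x)=d(x,\mathbf{0})$.

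For the forward direction, suppose $\MC{C}=\{c_0,c_1,\dots,c_m\}$ is an $E(n,d,m+1)$. Translate by $c_0$ and set $c_i'=c_i\oplus c_0$ for $0\le i\le m$. The translated set is again an $E(n,d,m+1)$, and now $c_0'=\mathbf{0}$. For $1\le i\le m$ we get $\mathrm{wt}(c_i')=d(c_i',\mathbf{0})=d(c_i,c_0)=d$. For $1\le i<j\le m$ we get $d(c_i',c_j')=d(c_i,c_j)=d$. The $m$ words $c_1',\dots,c_m'$ are distinct, because $d>0$ or $m=1$. Hence they form an $EC(n,d,d,m)$.

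For the converse, let $\{c_1,\dots,c_m\}$ be an $EC(n,d,d,m)$ and adjoin the zero word. For each $i$ we have $d(\mathbf{0},c_i)=\mathrm{wt}(c_i)=d$, and the distances among the $c_i$ are $d$ by hypothesis. So $\{\mathbf{0},c_1,\dots,c_m\}$ is an $E(n,d,m+1)$. The zero word is not already among the $c_i$ provided $d\ge 1$; if $d=0$, both statements are degenerate and only allow $m\le 1$ in the first case, which should be handled or excluded by the implicit assumption $d\ge1$.

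No step here is a real obstacle. The only point needing care is this degenerate case $d=0$, together with checking distinctness, and that is a one-line remark.
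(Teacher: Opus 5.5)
Your proposal is correct and is essentially the paper's proof: the paper also translates the code by a fixed codeword $\mathbf{c_0}$ to get the $EC(n,d,d,m)$, writing $\mathbf{c}-\mathbf{c_0}$, which over $\mathbb{F}_2$ is your XOR, and it adjoins $\mathbf{0}$ for the converse. Your attention to $d=0$ goes beyond the paper, but that remark is slightly misstated: the equivalence genuinely fails at $m=1$, since $\{\mathbf{0}\}$ is an $EC(n,0,0,1)$ while no $E(n,0,2)$ exists, so the assumption $d\ge 1$ is necessary rather than merely convenient.
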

\begin{proof}
Let $\MC{C}$ be an $E(n,d,m+1)$ and 
let $\MB{c_0} \in \MC{C}$. Define
\[ \MC{C}' = \{ \MB{c} -\MB{c_0}: \MB{c} \in \MC{C}, \MB{c} \neq \MB{c_0} \} .\]
Then $\MC{C}'$ is an $EC(n,d,d,m)$.

Conversely, suppose that $\MC{C}'$ is an $\MC{C}'$ is an $EC(n,d,d,m)$.
Define 
\[ \MC{C} = \MC{C}' \cup \{ \MB{0}\},\]
where $\MB{0}$ is a vector of $n$ zeroes. Clearly $\MC{C}$ is an $E(n,d,m+1)$.
\end{proof}

Let 
\[ A(n,d) = \max \{ m : \text{an $E(n,d,m)$ exists} \} \]
and let
\[ AC(n,d,w) = \max \{ m : \text{an $EC(n,d,w,m)$ exists} \} .\]

The following is an immediate consequence of Lemma \ref{weight.thm}.

\begin{corollary}
$A(n,d) = AC(n,d,d)+1$.
\end{corollary}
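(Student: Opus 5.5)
The corollary should follow directly from Lemma \ref{weight.thm} by comparing the two maxima in both directions, with $n$ and $d$ held fixed throughout. I will show $A(n,d) \leq AC(n,d,d)+1$ and $A(n,d) \geq AC(n,d,d)+1$ separately.

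For the first inequality, take an $E(n,d,m)$ with $m = A(n,d)$. If $m \geq 1$, write $m = (m-1)+1$ and apply the forward direction of Lemma \ref{weight.thm}. This produces an $EC(n,d,d,m-1)$, so $AC(n,d,d) \geq m-1$.

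For the reverse inequality, take an $EC(n,d,d,m')$ with $m' = AC(n,d,d)$. The converse direction of Lemma \ref{weight.thm} (adjoining the zero vector) gives an $E(n,d,m'+1)$, so $A(n,d) \geq m'+1$. Combining the two inequalities yields the stated equality.

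The only point needing care is that both maxima be well defined and finite. Both sets are nonempty, since a single codeword (or the empty constant-weight code, $m=0$) vacuously satisfies the equidistance condition. Both are also bounded above by $2^n$. The degenerate cases, such as $m=1$ mapping to the empty code, pass through the lemma's construction without change. I expect no real obstacle here; it is a direct translation of the lemma into a statement about maxima.
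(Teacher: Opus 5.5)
Your proposal is correct and follows the paper's approach: the paper records the corollary as an immediate consequence of Lemma \ref{weight.thm} applied in both directions. Your remarks on nonemptiness and finiteness of the two maxima (including the empty code when $d>n$) simply make explicit what the paper leaves implicit.

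One caveat to your claim that degenerate cases pass through unchanged: adjoining $\mathbf{0}$ enlarges the code only because $\mathbf{0}$ has weight $0\neq d$. So the argument, and indeed the identity, needs $d\geq 1$ (for $d=0$ one has $A(n,0)=AC(n,0,0)=1$), which the paper also tacitly assumes.
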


\begin{lemma}[folklore]
\label{setsystem.lem}
An $EC(n,d,d,m)$ with $m \geq 2$ is equivalent to a set system having $n$ points, $m$ blocks, block size $d$ and constant block intersection $d/2$.
\end{lemma}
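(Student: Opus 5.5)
The plan is to use the standard correspondence between binary vectors of length $n$ and subsets of an $n$-element point set $X=\{1,\dots,n\}$. Each codeword $\MB{c}$ is identified with its support $B_{\MB{c}}=\{i : c_i = 1\}$, so the weight of $\MB{c}$ equals $|B_{\MB{c}}|$. For two binary vectors $\MB{c},\MB{c}'$ with supports $B,B'$, the Hamming distance is the size of the symmetric difference:
\[ d(\MB{c},\MB{c}') = |B \triangle B'| = |B| + |B'| - 2|B\cap B'| . \]
This identity drives the whole argument in both directions.

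Given an $EC(n,d,d,m)$, take the supports of the $m$ codewords as blocks on $X$. Each block has size $d$, because every codeword has weight $d$. For two distinct codewords, the distance is $d$, so $d = d + d - 2|B\cap B'|$, which forces $|B\cap B'| = d/2$. In particular $d$ must be even. The blocks are distinct sets, since distinct codewords have distinct supports. This gives a set system with $n$ points, $m$ blocks, block size $d$ and constant intersection $d/2$.

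Conversely, given such a set system, form the characteristic vectors of the blocks. These have length $n$ and weight $d$. By the displayed identity, any two of them are at distance $d+d-2(d/2)=d$. We need distinct blocks to yield distinct codewords. Blocks of size $d$ meeting in $d/2$ points are distinct whenever $d>0$, and the degenerate case $d=0$ (at most one block in a simple system) is trivial. So the vectors form an $EC(n,d,d,m)$.

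The hypothesis $m\ge 2$ is what makes the intersection condition meaningful and forces $d$ to be even. There is no real obstacle here. The only care needed is to note that distinctness is preserved in both directions, that is, the block system is simple; repeated blocks would correspond to distance $0$ and are excluded.
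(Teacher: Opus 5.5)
Your proposal is correct and is the same argument as the paper's, which simply treats each codeword as the characteristic vector of a subset of an $n$-set. You have made explicit the identity $|B \triangle B'| = |B| + |B'| - 2|B\cap B'|$, which forces intersection $d/2$, together with the check that distinct codewords correspond to distinct blocks; the paper leaves both of these to the reader.
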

\begin{proof}
Given an  $EC(n,d,d,m)$, treat each codeword as the characteristic vector of a subset of an $n$-set.
\end{proof}

\begin{remark}
{\rm An immediate consequence of Lemma \ref{setsystem.lem} is that $d$ is even.} 
\end{remark}

We will make use of some results concerning $(r,\lambda)$-designs. 
An \emph{$(r,\lambda)$-design} is a set system in which every point occurs in exactly $r$ blocks and every pair of points occurs in exactly $\lambda$ blocks. To avoid trivial cases, it is also required that $r > \lambda$. Note that blocks of size one are permitted in an $(r,\lambda)$-design.
The first paper of which I am aware that contains an explicit definition of $(r,\lambda)$-designs is the 1966 paper of Stanton and Mullin \cite{StMu66}.

\begin{lemma}[folklore]
\label{dual.thm}
The dual of a set system having $n$ points, $m$ blocks, block size $d$ and constant block intersection $d/2$ is an $(r,\lambda)$-design on $m$ points and $n$ blocks with $r = d$ and $\lambda = d/2$.
\end{lemma}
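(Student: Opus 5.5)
The plan is to argue directly with the incidence matrix. Let the set system have points $X$ with $|X|=n$ and blocks $B_1,\dots,B_m$. Let $N$ be its $n\times m$ incidence matrix, so $N_{x,j}=1$ exactly when $x\in B_j$. The dual system has point set $\{1,\dots,m\}$ (one point per block $B_j$) and one block $D_x=\{j : x\in B_j\}$ for each $x\in X$. Its incidence matrix is $N^T$, viewed as an $m\times n$ matrix with rows indexed by the new points and columns by the new blocks.

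Next I would verify the two defining conditions of an $(r,\lambda)$-design.
\begin{itemize}
\item For the replication number: point $j$ lies in block $D_x$ exactly when $x\in B_j$. So the number of dual blocks containing $j$ is $|B_j|=d$, giving $r=d$.
\item For pairs: distinct points $j\neq k$ lie together in $D_x$ exactly when $x\in B_j\cap B_k$. So the number of dual blocks containing both is $|B_j\cap B_k|=d/2$, giving $\lambda=d/2$.
\end{itemize}
In matrix language both facts say $N^TN=(d/2)J+(d/2)I$, since the diagonal entries of $N^TN$ are $d$ and the off-diagonal entries are $d/2$. This is the standard matrix form of an $(r,\lambda)$-design.

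Finally I would check the nontriviality requirement $r>\lambda$. It holds as soon as $d>0$, because then $d>d/2$. I would note that $d=0$ (all blocks empty) is the degenerate case excluded implicitly in the statement. Empty dual blocks $D_x$ can occur when a point $x$ lies in no $B_j$. These are harmless, or can be discarded, which changes $n$ but not $r$ or $\lambda$. Blocks of size one are permitted by the paper's definition, so no further condition is needed.

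There is no real obstacle here. The only step requiring care is the bookkeeping of the duality: keeping straight that points and blocks swap roles, so that block size becomes replication number and block intersection size becomes pair multiplicity.
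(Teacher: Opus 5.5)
Your proof is correct and matches what the paper intends. The paper gives no separate proof, only the remark that the dual is obtained by transposing the incidence matrix, and your check that block sizes become replication numbers and pairwise intersections become pair counts (equivalently $N^TN=(d/2)I+(d/2)J$) is the routine verification behind that remark. One small point: keep any empty dual blocks rather than discarding them, since discarding would change the block count $n$ that the statement asserts.
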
 

\begin{remark}
{\rm The dual of a set system is obtained by transposing its incidence matrix, so that points and blocks are interchanged.}
\end{remark}

From Lemmas \ref{weight.thm}, \ref{setsystem.lem} and \ref{dual.thm},  we  have the following equivalences.

\begin{theorem}
\label{equivalence}
Suppose $m \geq 2$.
Existence of the following are equivalent.
\begin{enumerate}
\item An $E(n,d,m+1)$.
\item An $EC(n,d,d,m)$.
\item A set system having $n$ points, $m$ blocks of size $d$ and constant block intersection $d/2$.
\item An $(r,\lambda)$-design on $m$ points and $n$ blocks with $r = d$ and $\lambda = d/2$.
\end{enumerate}
\end{theorem}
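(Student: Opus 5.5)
The plan is to chain the three folklore lemmas already established. Each lemma gives a two-way correspondence, so composing them gives the equivalence of all four items in Theorem \ref{equivalence}. I will prove $1 \Leftrightarrow 2$, then $2 \Leftrightarrow 3$, then $3 \Leftrightarrow 4$, checking at each step that the parameters and the hypothesis $m \geq 2$ carry through.

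For $1 \Leftrightarrow 2$ I use Lemma \ref{weight.thm} directly. Given an $E(n,d,m+1)$, translate every codeword by a fixed codeword $\MB{c_0}$, i.e.\ take $\MB{c}-\MB{c_0}$ computed mod 2. This preserves all distances. Each translated word other than $\MB{0}$ is at distance $d$ from $\MB{0}$, so it has weight $d$. For the converse, adjoin $\MB{0}$ to an $EC(n,d,d,m)$.

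For $2 \Leftrightarrow 3$ I use Lemma \ref{setsystem.lem}. Identify each codeword with its support, a $d$-subset of an $n$-set. For two supports $B_i,B_j$ of size $d$, the Hamming distance is
\[ |B_i \triangle B_j| = 2d - 2|B_i\cap B_j| .\]
Hence distance exactly $d$ is equivalent to $|B_i\cap B_j| = d/2$. Because $m\ge 2$, there is at least one pair of codewords, so this condition forces $d$ to be even and $d/2$ to be a genuine integer intersection size. Distinct codewords have distinct supports, so we get $m$ distinct blocks. One should note whether the set system is allowed repeated blocks. Here repeated blocks cannot occur anyway, since two equal blocks would intersect in $d$ points, and $d \neq d/2$ when $d>0$.

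For $3 \Leftrightarrow 4$ I transpose the incidence matrix, as in Lemma \ref{dual.thm}. Let $M$ be the $m\times n$ matrix whose rows are the blocks. The set system condition is exactly
\[ MM^T = (d/2) I + (d/2) J ,\]
meaning each row has $d$ ones and each pair of rows has $d/2$ common ones. Read instead as a structure with $m$ points (the rows) and $n$ blocks (the columns), the same identity says that every point lies in $r=d$ blocks and every pair of points lies in $\lambda=d/2$ blocks. Transposing back recovers the original set system, so the correspondence is reversible.

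The only step needing care, and the main obstacle, is the nondegeneracy condition $r>\lambda$ in the definition of an $(r,\lambda)$-design. This holds exactly when $d>0$. If $d=0$, then all codewords coincide, which is impossible when there are $m+1 \ge 3$ distinct codewords; so $d>0$ follows from $m \geq 2$. Conversely, the definition of an $(r,\lambda)$-design requires $r>\lambda$, so $d>0$ there too. Columns of the incidence matrix may be empty or repeated, and I will remark that the design must allow such blocks (or multisets of blocks); otherwise the correspondence in $3 \Leftrightarrow 4$ needs a caveat. With these points settled, the four statements are equivalent.
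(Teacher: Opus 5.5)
Your proposal is correct and follows the paper's proof: the paper obtains Theorem \ref{equivalence} by chaining Lemmas \ref{weight.thm}, \ref{setsystem.lem} and \ref{dual.thm}, just as you do. Your additional checks are correct refinements that the paper leaves implicit: that $m \geq 2$ forces $d>0$ and hence $r>\lambda$, that the blocks in item 3 are automatically distinct, and that the dual in item 4 must be allowed to have empty or repeated blocks.
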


\medskip
We will later make use of a technique known as \emph{$r$-complementation} (see  Vanstone \cite{Vanstone}). Suppose we have an $(r,\lambda)$-design $(X,\B)$, where $X$ is a set of $m$ points and $\B$ is a collection of $n$ blocks.
Pick any point $x\in X$. For every block $B \in \B$ with $x \in B$, replace $B$ by $X \setminus B$. 
For every block $B \in \B$ with $x \not\in B$, leave $B$ unchanged. The resulting set of blocks is denoted by $\B'$. Also, let $X' = X \setminus \{x\}$. (Note that no block in $\B'$ contains the point $x$.) The set system $(X',\B')$ is an \emph{$r$-complement} of $(X,\B)$.

The following result is proven by a simple counting argument.

\begin{lemma}\textup{\cite{Vanstone}}
\label{r-comp}
Any $r$-complement of an $(r,\lambda)$-design on $m$ points and $n$ blocks is a $(2s,s)$-design having  
$m-1$ points and $n$ blocks, where $s= r - \lambda$. 
\end{lemma}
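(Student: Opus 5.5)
We prove Lemma \ref{r-comp} by directly checking the two defining properties of a $(2s,s)$-design for $(X',\B')$: every point lies in exactly $2s$ blocks, and every pair lies in exactly $s$ blocks. Throughout, let $x$ be the chosen point. The number of blocks is clearly still $n$, since each block is either kept or replaced by its complement. One preliminary fact is needed: there are exactly $r$ blocks containing $x$, and for any $y \neq x$ there are exactly $\lambda$ blocks containing both $x$ and $y$.

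\emph{Replication numbers.} Fix $y \in X'$, so $y \neq x$. We count the blocks of $\B'$ containing $y$, sorted by whether the original block $B$ contained $x$.

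If $x \notin B$, then $B$ is unchanged, and $y \in B$. The number of such blocks is $r - \lambda$.

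If $x \in B$, then $B$ is replaced by $X\setminus B$, and $y \notin B$. The number of such blocks is $r - \lambda$.

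Hence $y$ lies in $2(r-\lambda) = 2s$ blocks of $\B'$.

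\emph{Pair counts.} Fix distinct $y,z \in X'$ and let $\mu$ be the number of blocks containing all of $x,y,z$. This triple count is not determined by the design parameters, so the key step is to check that it cancels. Again we split by whether $x \in B$.

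\begin{itemize}
\item Blocks with $x\notin B$ and $y,z\in B$: there are $\lambda - \mu$ of these.
\item Blocks with $x\in B$ and $y,z\notin B$: by inclusion--exclusion over the $r$ blocks through $x$, there are $r - \lambda - \lambda + \mu = r - 2\lambda + \mu$ of these.
\end{itemize}

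Summing gives $(\lambda-\mu) + (r - 2\lambda + \mu) = r - \lambda = s$, independent of $\mu$. So every pair of points of $X'$ lies in exactly $s$ blocks of $\B'$.

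Finally, the nontriviality condition $2s > s$ holds because $s = r-\lambda > 0$. Note also that a complemented block $X\setminus B$ never contains $x$, so every block of $\B'$ is indeed a subset of $X'$. The only step requiring care is the pair count, where the unknown triple count $\mu$ must cancel; the computation above shows that it does.
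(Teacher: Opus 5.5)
Your proof is correct and is exactly the ``simple counting argument'' the paper invokes; the paper gives no details and just cites Vanstone. The replication count $(r-\lambda)+(r-\lambda)=2s$, the pair count $(\lambda-\mu)+(r-2\lambda+\mu)=s$ in which the unknown triple count $\mu$ cancels, and the check $s=r-\lambda>0$ are precisely what is needed.
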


The following is an immediate consequence of Theorem \ref{equivalence} and Lemma \ref{r-comp}.

\begin{corollary}\label{rlam-equiv}
The $r$-complement of an $(r,\lambda)$-design on $m$ points and $n$ blocks is 
equivalent to an  $E(n,2(r - \lambda),m)$.
\end{corollary}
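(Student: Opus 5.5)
The statement to prove is Corollary \ref{rlam-equiv}. The plan is to chain Lemma \ref{r-comp} with the equivalence in Theorem \ref{equivalence}, reading the latter from item 4 back to item 1 with the parameters shifted.

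First, let $(X,\B)$ be an $(r,\lambda)$-design on $m$ points and $n$ blocks, and let $(X',\B')$ be an $r$-complement with respect to a point $x$. By Lemma \ref{r-comp}, $(X',\B')$ is a $(2s,s)$-design on $m-1$ points with $n$ blocks, where $s=r-\lambda>0$. Note that $r>\lambda$ is part of the definition of an $(r,\lambda)$-design, so this is a genuine $(r',\lambda')$-design with $r'=2s$ and $\lambda'=s=r'/2$.

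Next, I would apply Theorem \ref{equivalence} with $m-1$ in place of $m$ and $d=2s$. Item 4 then reads: an $(r',\lambda')$-design on $m-1$ points and $n$ blocks with $r'=d$ and $\lambda'=d/2$. This is exactly what we have. Item 1 then gives an $E(n,2s,m)=E(n,2(r-\lambda),m)$.

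For concreteness, the chain of constructions is as follows. Take the dual of $(X',\B')$, which is a set system with $n$ points and $m-1$ blocks of size $2s$ meeting pairwise in $s$ points (Lemma \ref{dual.thm}, read in reverse). Its characteristic vectors form an $EC(n,2s,2s,m-1)$ (Lemma \ref{setsystem.lem}). Adjoining $\MB{0}$ gives the $E(n,2s,m)$ (Lemma \ref{weight.thm}). Each step is reversible, so "equivalent" holds in the same sense as in Theorem \ref{equivalence}.

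The only point needing care is the hypothesis $m\ge 2$ of Theorem \ref{equivalence}. Here it is applied with $m-1$ in place of $m$, so it requires $m-1\ge 2$, i.e.\ $m\ge 3$. In the degenerate case $m-1=1$ the statement still holds directly: an $E(n,2s,2)$ is simply two words at distance $2s$. That case, together with the reversibility of the dual and complement steps, is the main (minor) obstacle. The substantive counting is already contained in Lemma \ref{r-comp}.
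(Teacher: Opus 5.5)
Your proof is correct and is exactly the paper's argument: Lemma~\ref{r-comp} gives a $(2s,s)$-design on $m-1$ points and $n$ blocks with $s=r-\lambda$, and the equivalence $4 \Leftrightarrow 1$ of Theorem~\ref{equivalence} (with $m-1$ in place of $m$ and $d=2s$) turns it into an $E(n,2(r-\lambda),m)$. Your separate check of the case $m-1=1$ is a small refinement that the paper omits; note that you never need the complementation step to be reversible, because the corollary concerns only the $r$-complement itself.
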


\begin{proof}
The $r$-complement of an $(r,\lambda)$-design on $m$ points and $n$ blocks is $(2s,s)$-design having  
$m-1$ points and $n$ blocks, where $s= r - \lambda$. From $4. \Leftrightarrow 1.$ in Theorem \ref{equivalence}, the $(2s,s)$-design is equivalent to a $E(n,2(r - \lambda),m)$.
\end{proof}

\begin{example}
\label{example13}
{\rm
We start with a $(13,4,1)$-SBIBD having the following blocks:
\[
\begin{array}{llll}
\{ 0,1,3,9\}  & \{ 1,2,4,10\} & \{ 2,3,5,11\} & \{ 3,4,6,12\} \\
\{ 4,5,7,0\}  & \{ 5,6,8,1\} & \{ 6,7,9,2\} & \{ 7,8,10,3\} \\
\{ 8,9,11,4\}  & \{ 9,10,12,5\} & \{ 10,11,0,6\} & \{ 11,12,1,7\} \\
\{ 12,0,2,8\}
\end{array}
\]
Suppose we complement the four blocks containing $0$:
\[
\begin{array}{llll}
\{ 2,4,5,6,7,8,10,11,12\}  & \{ 1,2,4,10\} & \{ 2,3,5,11\} & \{ 3,4,6,12\} \\
\{ 1,2,3,6,8,9,10,11,12\}  & \{ 5,6,8,1\} & \{ 6,7,9,2\} & \{ 7,8,10,3\} \\
\{ 8,9,11,4\}  & \{ 9,10,12,5\} & \{ 1,2,3,4,5,7,8,9,12\} & \{ 11,12,1,7\} \\
\{ 1,3,4,5,6,7,9,10,11\}
\end{array}
\]
We obtain a $(6,3)$-design having 12 points and 13 blocks (Lemma \ref{r-comp}).

By Corollary \ref{rlam-equiv}, this design is equivalent to an $E(13,6,13)$.
We explicitly construct the equidistant code.
The $12$ by $13$ point-block incidence matrix of the $(6,3)$-design is
\[
\left(
\begin{array}{ccccccccccccc}
0 & 1 & 0 & 0 & 1 & 1 & 0 & 0 & 0 & 0 & 1 & 1 & 1 \\ 
1 & 1 & 1 & 0 & 1 & 0 & 1 & 0 & 0 & 0 & 1 & 0 & 0 \\ 
0 & 0 & 1 & 1 & 1 & 0 & 0 & 1 & 0 & 0 & 1 & 0 & 1 \\ 
1 & 1 & 0 & 1 & 0 & 0 & 0 & 0 & 1 & 0 & 1 & 0 & 1 \\ 
1 & 0 & 1 & 0 & 0 & 1 & 0 & 0 & 0 & 1 & 1 & 0 & 1 \\ 
1 & 0 & 0 & 1 & 1 & 1 & 1 & 0 & 0 & 0 & 0 & 0 & 1 \\ 
1 & 0 & 0 & 0 & 0 & 0 & 1 & 1 & 0 & 0 & 1 & 1 & 1 \\ 
1 & 0 & 0 & 0 & 1 & 1 & 0 & 1 & 1 & 0 & 1 & 0 & 0 \\ 
0 & 0 & 0 & 0 & 1 & 0 & 1 & 0 & 1 & 1 & 1 & 0 & 1 \\ 
1 & 1 & 0 & 0 & 1 & 0 & 0 & 1 & 0 & 1 & 0 & 0 & 1 \\ 
1 & 0 & 1 & 0 & 1 & 0 & 0 & 0 & 1 & 0 & 0 & 1 & 1\\ 
1 & 0 & 0 & 1 & 1 & 0 & 0 & 0 & 0 & 1 & 1 & 1 & 0   
\end{array}
\right).
\]
The columns of this matrix correspond to the blocks of the $(6,3)$-design and 
the rows of this matrix form an $EC(13,6,6,12)$. 
If we adjoin a codeword consisting of $13$ $0$'s, then we obtain
an $E(13,6,13)$. In view of Theorem \ref{step3}, this equidistant code is optimal.
}\hfill$\blacksquare$
\end{example}


\section{Proof of Theorem \ref{step1}}
\label{step1.sec}

Theorem \ref{step1} is an immediate consequence of classical results.
Fisher's Inequality for BIBDs states that a $(v,b,r,k,\lambda)$-BIBD must have $b \geq v$. 
The 1966 Stanton-Mullin paper \cite{StMu66} extends Fisher's Inequality (for BIBDs) to  
$(r,\lambda)$-designs.

\begin{theorem}
\label{Fisher}
\textup{\cite{StMu66}}
If an $(r,\lambda)$-design has $m$ points and $n$ blocks, then $n \geq m$. 
\end{theorem}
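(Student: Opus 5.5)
We prove the Fisher-type inequality for an $(r,\lambda)$-design by the standard linear-algebra argument: compute the Gram matrix of the incidence matrix and show it is nonsingular. Let the design have point set $X$ with $|X| = m$ and blocks $B_1,\dots,B_n$. Let $M$ be the $m \times n$ point-block incidence matrix. Since every point lies in exactly $r$ blocks and every pair of distinct points lies in exactly $\lambda$ blocks, the $m \times m$ matrix $MM^T$ has diagonal entries $r$ and off-diagonal entries $\lambda$. That is,
\[ MM^T = (r-\lambda) I_m + \lambda J_m, \]
where $J_m$ is the all-ones matrix.

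Next we show this matrix is nonsingular. Its eigenvalues are explicit. The all-ones vector is an eigenvector with eigenvalue $r-\lambda+\lambda m$. Any vector orthogonal to the all-ones vector is an eigenvector with eigenvalue $r-\lambda$, with multiplicity $m-1$. Since the definition requires $r>\lambda$, we have $r-\lambda>0$. Also $\lambda \geq 0$, so $r-\lambda+\lambda m \geq r-\lambda > 0$. Hence $\det(MM^T) = (r-\lambda)^{m-1}(r+\lambda(m-1)) \neq 0$. Equivalently, $MM^T$ is positive definite, being the sum of the positive definite matrix $(r-\lambda)I_m$ and the positive semidefinite matrix $\lambda J_m$.

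It follows that $MM^T$ has rank $m$. But $\mathrm{rank}(MM^T) \leq \mathrm{rank}(M) \leq n$, because $M$ has only $n$ columns. Therefore $m \leq n$.

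There is no serious obstacle. The one point needing care is the degenerate case $\lambda = 0$, where the design may contain blocks of size one or the pair condition is vacuous. The positive-definiteness argument covers this uniformly, since it only uses $r>\lambda\geq 0$. Nothing about block sizes is needed, so blocks of size one, which are permitted here, do no harm.
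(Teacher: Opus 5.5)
Your proof is correct: $MM^T=(r-\lambda)I_m+\lambda J_m$ is positive definite since $r>\lambda\ge 0$, so $m=\mathrm{rank}(MM^T)\le\mathrm{rank}(M)\le n$, and block sizes never enter.

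The paper does not write out a proof of its own. It cites Stanton and Mullin, whose argument builds on Ryser's theorem for the square case $n=m$. One way to run that reduction: if $n<m$, append $m-n$ zero columns to $M$, which leaves $MM^T$ unchanged. Ryser's theorem would then force every column sum of the resulting square matrix to equal $r\ge 1$, which the zero columns violate. The paper also remarks that standard proofs of Fisher's inequality carry over unchanged to $(r,\lambda)$-designs. Yours is exactly such a standard proof, so you have taken the direct route the paper alludes to, not the cited Ryser-based one.

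What each buys: your argument is self-contained and uses only the Gram matrix. The Ryser route invokes a stronger theorem, whose own proof starts from the nonsingularity of $MM^T$ that you establish. In exchange it delivers the equality case at the same time, namely that $n=m$ forces a symmetric BIBD. The paper needs that equality statement anyway, to show that an $E(n,d,n+1)$ yields an $(n,d,d/2)$-SBIBD and hence $n=2d-1$. Your rank argument does not give this by itself, so in the paper's context Ryser's theorem would still have to be supplied separately.
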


The proof of Theorem \ref{Fisher} given in \cite{StMu66} builds on the following result of Ryser \cite{Ryser1950} (proven in 1950) that applies specifically to the case $n=m$. 

\begin{theorem}
\label{Ryser}
\textup{\cite{Ryser1950},\cite[Ch. 8, \S2]{Ryser}}
If an $(r,\lambda)$-design has $n$ points and $n$ blocks, then it is a (symmetric)  $(n,n,r,r,\lambda)$-BIBD. 
\end{theorem}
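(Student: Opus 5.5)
The statement to prove is Theorem \ref{Ryser}: an $(r,\lambda)$-design with $n$ points and $n$ blocks is a symmetric BIBD. My plan is to use the incidence matrix and a determinant/rank argument, followed by a counting argument on block sizes.

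Let $N$ be the $n\times n$ point-block incidence matrix, with rows indexed by points. The defining conditions say exactly that
\[ NN^T = (r-\lambda)I + \lambda J. \]
Since $r>\lambda$, this matrix has eigenvalue $r-\lambda>0$ with multiplicity $n-1$ and eigenvalue $r+(n-1)\lambda>0$ on the all-ones vector. Hence it is nonsingular, so $N$ is invertible.

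The goal is to show that all blocks have the same size $k$ and that $N^TN = (r-\lambda)I+\lambda J$. Let $k_j$ be the size of block $j$ and set $\mathbf{k}=N^T\mathbf{1}$. We have $N\mathbf{1}=r\mathbf{1}$, and $NN^T$ commutes with $J$. Using the invertibility of $N$:
\begin{enumerate}
\item Write $N^TN = N^{-1}(NN^T)N = (r-\lambda)I + \lambda N^{-1}JN$.
\item From $N\mathbf{1}=r\mathbf{1}$ we get $N^{-1}\mathbf{1}=r^{-1}\mathbf{1}$. Hence $N^{-1}JN = N^{-1}\mathbf{1}\mathbf{1}^TN = r^{-1}\mathbf{1}\mathbf{k}^T$.
\item So $N^TN = (r-\lambda)I + (\lambda/r)\mathbf{1}\mathbf{k}^T$.
\end{enumerate}
The left-hand side $N^TN$ is symmetric, so $\mathbf{1}\mathbf{k}^T$ must be symmetric. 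This forces all $k_j$ to be equal to a common value $k$.

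To identify $k$, double count incidences: $nk = nr$, so $k=r$. Substituting back gives $N^TN=(r-\lambda)I+\lambda J$. Thus every pair of blocks meets in exactly $\lambda$ points, and every block has size $r$.

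It remains to check that $(X,\B)$ is a BIBD. Every point lies in $r$ blocks, every pair of points lies in $\lambda$ blocks, blocks have constant size $k=r$, and $b=v=n$. So it is an $(n,n,r,r,\lambda)$-BIBD. This also recovers the usual relation $\lambda(n-1)=r(r-1)$ by counting pairs through a fixed point.

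The main obstacle is showing that the block sizes are constant, since an $(r,\lambda)$-design a priori allows varying block sizes. The symmetry trick above handles this, but it depends on having invertibility of $N$ in hand first.
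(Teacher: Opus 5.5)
The paper gives no proof of this theorem; it only cites Ryser. Your argument is essentially Ryser's classical matrix proof, and its steps are sound: $N$ is nonsingular because $\det(NN^T)=(r-\lambda)^{n-1}(r+(n-1)\lambda)>0$, then $N^TN=(r-\lambda)I+\lambda N^{-1}JN$, and $N^{-1}\mathbf{1}=r^{-1}\mathbf{1}$.

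There is one small hole. The inference ``$N^TN$ is symmetric, hence $\mathbf{1}\mathbf{k}^T$ is symmetric'' cancels the coefficient $\lambda/r$, so it needs $\lambda>0$. Ryser assumes $0<\lambda$, but the paper's definition of an $(r,\lambda)$-design only requires $r>\lambda$, so $\lambda=0$ is allowed. In that case your formula reduces to $N^TN=rI$, and the symmetry argument tells you nothing about the $k_j$.

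Either of two one-line fixes removes the case split:
\begin{enumerate}
\item Compare diagonal entries instead. Since $N$ is a $0/1$ matrix, $(N^TN)_{jj}=k_j$, so your formula gives $k_j=(r-\lambda)+(\lambda/r)k_j$. Hence $k_j=r$, because $r\neq\lambda$.
\item Apply $N^{-1}$ to $NN^T\mathbf{1}=(r+(n-1)\lambda)\mathbf{1}$. This gives $\mathbf{k}=N^T\mathbf{1}=\frac{r+(n-1)\lambda}{r}\mathbf{1}$ directly.
\end{enumerate}
Either way you get constant block size for every $\lambda\ge 0$, and the rest of your proof goes through unchanged.
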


\begin{remark}
{\rm Fisher's Inequality for $(r,\lambda)$-designs can also be proven directly. Most proofs of Fisher's Inequality (for BIBDs) also apply without modification to $(r,\lambda)$-designs, because the proofs do not make use of the fact that, in a BIBD, the blocks all have the same size.}
\end{remark}

In step 1 of Figure \ref{fig1}, we are interested in maximizing $m$ as a function of $n$.
By Fisher's inequality for $(r,\lambda)$-designs (Theorem \ref{Fisher}), 
we have $n \geq m$.  Hence, from Theorem \ref{weight.thm}, we immediately obtain the following bound, which holds for all possible values of $d$.
\begin{corollary}
\label{cor2}
$A(n, d) \leq n+1$.
\end{corollary}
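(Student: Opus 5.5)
The plan is to route Corollary \ref{cor2} through the chain of equivalences in Theorem \ref{equivalence} and then apply Fisher's Inequality for $(r,\lambda)$-designs (Theorem \ref{Fisher}). First, suppose an $E(n,d,M)$ exists. By Lemma \ref{weight.thm} (equivalently, parts 1 and 2 of Theorem \ref{equivalence}), this gives an $EC(n,d,d,M-1)$. Set $m = M-1$. Our goal is to show $m \leq n$, which gives $M \leq n+1$.

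Small values of $M$ I will handle directly. If $M \leq 2$, the bound $M \leq n+1$ is immediate whenever $n \geq 1$. A code with two distinct codewords needs $d \geq 1$, hence $n \geq 1$, so no special attention is needed there. I will also assume $d \geq 1$, since $d = 0$ allows at most one codeword.

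So assume $m \geq 2$. Theorem \ref{equivalence} (parts $2 \Rightarrow 4$) then gives a set system on $m$ points with $n$ blocks in which every point lies in $r = d$ blocks and every pair of points lies in $\lambda = d/2$ blocks. By the Remark after Lemma \ref{setsystem.lem}, $d$ is even. Also $d \geq 2$, because two distinct codewords of weight $d$ are at distance $d$. Hence $r = d > d/2 = \lambda$, so this really is an $(r,\lambda)$-design in the required nontrivial sense.

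Theorem \ref{Fisher} now gives $n \geq m = M-1$, so $M \leq n+1$. Since $d$ was arbitrary, this gives $A(n,d) \leq n+1$. The only point needing any care is the bookkeeping in the degenerate cases: $m < 2$ is excluded in Theorem \ref{equivalence}, and $r > \lambda$ must hold for the definition of an $(r,\lambda)$-design. Both are settled by the case split above, so I expect no real obstacle.
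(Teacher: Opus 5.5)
Your proposal is correct and follows the paper's argument: it converts an $E(n,d,m+1)$ into an $(r,\lambda)$-design on $m$ points with $n$ blocks via Theorem~\ref{equivalence}, then applies Fisher's Inequality (Theorem~\ref{Fisher}) to get $m \leq n$. Your extra bookkeeping for $M \leq 2$ and your check that $r = d > d/2 = \lambda$ supply details the paper leaves implicit.
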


\begin{remark}
{\rm It has been observed by Fu {\it et al.} \cite{FKLW} and by Ionin and Shrikhande \cite{IS95} that Corollary \ref{cor2} is in fact a special case of a much more general result proven by Delsarte  \cite[Theorem 4.8]{Delsarte} in 1973 using more sophisticated mathematical techniques.}
\end{remark}




Let us  consider the case $n = m$ in more detail. From Theorem \ref{equivalence},  an $E(n,d,n+1)$ is equivalent to 
an $(r,\lambda)$-design on $n$ points and $n$ blocks with $r = d$ and $\lambda = d/2$.
From Theorem \ref{Ryser}, this $(r,\lambda)$-design must be an $(n,r,\lambda)$-SBIBD. Therefore 
$(d/2)(n-1) = d(d-1)$ and hence
$n = 2d-1$. Since $d$ is even, it is necessary that $n \equiv 3 \bmod 4$. Consequently, if $n \not\equiv 3 \bmod 4$, it must be the case that $n \geq m+1$. 

The above discussion proves Theorem \ref{step1}. But we can say a bit more. The $(n,r,\lambda)$-SBIBD is  a $(2d-1,d,d/2)$-SBIBD where $d$ is even.
Writing $d = 2t$, we have a $(4t-1,2t,t)$-SBIBD. This SBIBD is equivalent to a Hadamard matrix of order $4t$. Thus we obtain the following theorem. 
\begin{theorem}
\label{3mod4}
If $n \equiv 3 \bmod 4$, then 
$m^* = n+1$, $d^* = (n+1)/2$ and an $E(n,(n+1)/2,n+1)$ is equivalent to a Hadamard matrix of order $n+1$.
\end{theorem}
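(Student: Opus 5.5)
We need to establish three things for $n \equiv 3 \bmod 4$: that $m^* = n+1$, that $d^* = (n+1)/2$, and that an $E(n,(n+1)/2,n+1)$ is equivalent to a Hadamard matrix of order $n+1$. The plan is to combine the upper bound from Corollary~\ref{cor2}, the forced parameters coming from Theorem~\ref{Ryser}, and the classical correspondence between $(4t-1,2t,t)$-SBIBDs and Hadamard matrices of order $4t$.

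\textbf{Parameters are forced.} Suppose $m = n+1$ is attained by some $E(n,d,n+1)$. By Theorem~\ref{equivalence} this is equivalent to an $(r,\lambda)$-design on $n$ points and $n$ blocks with $r = d$ and $\lambda = d/2$. Theorem~\ref{Ryser} then makes it an $(n,d,d/2)$-SBIBD. The standard relation $\lambda(n-1) = r(r-1)$ gives $n = 2d-1$, that is, $d = (n+1)/2$. So whenever $m^* = n+1$, the only admissible distance is $(n+1)/2$, which yields $d^* = (n+1)/2$. This matches the upper bound in Theorem~\ref{step2}, though that bound is not even needed here.

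\textbf{Equivalence with Hadamard matrices.} Write $n = 4t-1$, so $d = 2t$. The chain of equivalences in Theorem~\ref{equivalence} (items 1 $\Leftrightarrow$ 4), together with Theorem~\ref{Ryser}, shows that an $E(4t-1,2t,4t)$ is equivalent to a $(4t-1,2t,t)$-SBIBD. Conversely, any $(4t-1,2t,t)$-SBIBD is in particular a $(2t,t)$-design on $4t-1$ points and $4t-1$ blocks, so Theorem~\ref{equivalence} gives back an $E(4t-1,2t,4t)$.

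It remains to invoke the classical fact that a $(4t-1,2t,t)$-SBIBD exists if and only if a Hadamard matrix of order $4t$ exists. The more familiar statement uses the complementary $(4t-1,2t-1,t-1)$-SBIBD; complementation passes between the two. If the paper wants this spelled out, I would sketch it as follows. Normalize the Hadamard matrix so its first row and first column are all $+1$, delete that row and column, and read the $-1$ entries as incidences (or the $+1$ entries, depending on which of the two SBIBDs is wanted). Row orthogonality then translates into constant pairwise intersection.

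\textbf{Existence, and the main obstacle.} The step I expect to be the main obstacle is the claim $m^* = n+1$ itself. Corollary~\ref{cor2} gives only the upper bound $m^* \le n+1$; equality requires that a Hadamard matrix of order $n+1$ exist. That is precisely the Hadamard conjecture, so it is not provable unconditionally. I would therefore read, and state explicitly, the first clause of the theorem in the sense of Figure~\ref{fig1}: the optimum $m^* = n+1$ is achievable exactly when such a Hadamard matrix exists, in which case $d^* = (n+1)/2$. The substantive, unconditional content is the characterization given by the equivalence above.
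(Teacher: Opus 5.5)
Your proposal is correct and follows essentially the same route as the paper: Theorem~\ref{equivalence} together with Ryser's Theorem~\ref{Ryser} forces an $(n,d,d/2)$-SBIBD, hence $n=2d-1$ with $d=2t$ even, and the resulting $(4t-1,2t,t)$-SBIBD is identified with a Hadamard matrix of order $4t$. Your caveat is also well taken: the paper's argument, like yours, only yields $m^*=n+1$ (and hence $d^*=(n+1)/2$) when a Hadamard matrix of order $n+1$ actually exists, so that clause of the statement is implicitly conditional.
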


\begin{remark}
{\rm Theorem \ref{3mod4} was stated explicitly by Ionin and Shrikhande in  \cite[Theorem 2.1]{IS95}, but it is equivalent to an earlier result of Mullin, Roy and Schellenberg that was presented in \cite[Theorem 4.6]{MRS}. Note that the results in \cite{MRS} are described in the setting of the so-called ``Cordes problem'' that was introduced in \cite{Cordes}. See Section \ref{related} for additional discussion on the Cordes problem.}
\end{remark}



Theorem \ref{3mod4} completely solves all three steps for the case $n \equiv 3 \bmod 4$.
Therefore, in the next section, we only have to consider $n \equiv 0,1,2 \bmod 4$.

\section{Proof of Theorem \ref{step2}}
\label{step2.sec}




If $n \not\equiv 3 \bmod 4$, then we have shown that $m^* = n$ in Theorem \ref{step1}. 
In these cases, we can use the classical Plotkin Bound for binary codes to upper-bound $d$.\footnote{A similar approach has  been used  to study equidistant $q$-ary codes. See, e.g., Fu {\it et al.} \cite[Corollary 2]{FKLW} and 
Sinha, Wang and Wu \cite{Sinha}.} We just need the case where $d$ is even.

\begin{theorem}[Plotkin Bound]\textup{\cite[Theorem 1]{Plotkin}}
\label{plotkin.thm}
If a binary code of length $n$ has $m$ codewords and minimum distance $d > n/2$, then
$m \leq  \frac{2d}{2d-n} $.
\end{theorem}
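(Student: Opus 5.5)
We prove the Plotkin Bound by the classical double-counting argument: we bound the sum of all pairwise Hamming distances from below using the minimum distance, and from above by counting column by column. Let the code be $\MC{C} = \{\MB{c_1},\dots,\MB{c_m}\}$ and consider
\[ S = \sum_{1 \leq i < j \leq m} d_H(\MB{c_i},\MB{c_j}). \]

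For the lower bound, every pair of distinct codewords is at distance at least $d$, so $S \geq \binom{m}{2} d = \frac{m(m-1)d}{2}$.

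For the upper bound, write the codewords as the rows of an $m \times n$ binary matrix and compute $S$ column by column. If column $k$ contains $a_k$ ones and $m-a_k$ zeroes, then it contributes exactly $a_k(m-a_k)$ to $S$, since that is the number of pairs of rows differing in that coordinate. Since $a(m-a) \leq m^2/4$ for every integer $a$ (AM--GM, or completing the square), we get $S \leq n m^2/4$.

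Combining the two bounds gives $\frac{m(m-1)d}{2} \leq \frac{nm^2}{4}$. Dividing by $m/2 > 0$ (the case $m \leq 1$ is trivial) yields $(m-1)d \leq nm/2$, that is, $m(2d-n) \leq 2d$. The hypothesis $d > n/2$ is used only here: it makes $2d - n > 0$, so we may divide to obtain $m \leq \frac{2d}{2d-n}$, as claimed.

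There is no real obstacle. The only points requiring care are the per-column count $a_k(m-a_k)$ and keeping track of the sign of $2d-n$ when dividing. Sharper versions (taking floors, or separating $m$ even and odd, where $a(m-a) \leq (m^2-1)/4$ for odd $m$) are not needed for the stated bound, and so are not pursued. In the application to equidistant codes, the inequality $S \geq \binom{m}{2}d$ holds with equality, which is what later makes the bound usable for pinning down $d^*$.
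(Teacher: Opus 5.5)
Your proof is correct and complete: it is the standard double count of $\sum_{i<j} d_H(\MB{c_i},\MB{c_j})$, bounded below by $\binom{m}{2}d$ and above by $nm^2/4$ via $a_k(m-a_k)\le m^2/4$ in each coordinate, which is the usual proof of this bound. The paper gives no proof of its own and simply cites \cite[Theorem 1]{Plotkin}. One small correction to your closing aside: the paper's proof of Theorem \ref{step2} uses only the inequality $m \leq 2d/(2d-n)$ as stated, so equality in $S \geq \binom{m}{2}d$ plays no role there.
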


We can now prove Theorem \ref{step2}. It suffices to prove that,
if $n \equiv r \bmod 4$, where $0\leq r \leq 2$, then 
$d^* \leq (n-r)/2$.

\begin{proof}[Proof of Theorem \ref{step2}]
The case $n \equiv 3 \bmod 4$ was already done in Theorem \ref{3mod4}, so we can assume $m = n$,  $n\equiv 0,1,2 \bmod 4$ and $n \geq 4$.
Write $n = 4t + r$, where $0 \leq r \leq 2$. We claim that $d \leq 2t$. Suppose not; then $d \geq 2t+2$ because $d$ is even. Note that $d > n/2$ in this case. Then Theorem \ref{plotkin.thm} asserts that
\begin{align*} n &\leq  \frac{2d}{2d-n} \\
&= 1 + \frac{n}{2d-n} \\
&\leq 1 + \frac{n}{4t+4-(4t+r)} \\
&=  1 + \frac{n}{4-r} \\\
&\leq 1 + \frac{n}{2}.\end{align*}
This is clearly a contradiction because $n \geq 4$.
\end{proof}

\section{Proof of Theorem \ref{step3}}
\label{step3.sec}

Now we want to characterize the optimal equidistant codes.
We have already done this for $n \equiv 3 \bmod 4$ in Theorem \ref{3mod4}.
Therefore we only need to address $n \equiv 0,1,2 \bmod 4$. We will be discussing the results stated as cases 2, 3 and 4 of Theorem \ref{step3}.

The case $n \equiv 0 \bmod 4$ is the easiest. 

\begin{lemma}
\label{0mod4}
If $n \equiv 0 \bmod 4$, then an $E(n,n/2,n)$ is equivalent to a Hadamard matrix of order $n$.
\end{lemma}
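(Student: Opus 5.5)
We translate the code into a design via Theorem \ref{equivalence} and then recognise the design as something Hadamard-equivalent. Write $n = 4t$ and $d = n/2 = 2t$. By Theorem \ref{equivalence}, an $E(4t,2t,4t)$ is equivalent to an $(r,\lambda)$-design on $m = 4t-1$ points and $n = 4t$ blocks with $r = 2t$ and $\lambda = t$. Such a design is exactly the kind of object Hadamard matrices produce, so we aim to show it must be a $3$-$(4t,2t,t-1)$ design, or the associated $(4t-1,2t-1,t-1)$-SBIBD, with an added structure. We will build a Hadamard matrix of order $4t$ from it and conversely.

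\textbf{From the design to a Hadamard matrix.} Let $N$ be the $(4t-1)\times 4t$ point-block incidence matrix of the $(2t,t)$-design. Form $H_0 = J - 2N$, a $\pm1$ matrix. Because every row of $N$ has $2t$ ones and any two rows share $t$ ones, every row of $H_0$ has sum $0$ and distinct rows are orthogonal. Adjoining an all-ones row $\mathbf{1}$ gives a $4t\times 4t$ $\pm 1$ matrix. This new row is orthogonal to the others, since each of them has sum $4t - 2\cdot 2t = 0$. Hence the result is a Hadamard matrix of order $4t$. Equivalently, and more directly, we can work with the code itself. The $n$ codewords of length $n$ pairwise at distance $n/2$ become $\pm1$ vectors under $0\mapsto 1$, $1\mapsto -1$, and these vectors are pairwise orthogonal. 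So the $n\times n$ matrix they form is Hadamard. This observation already proves one direction without designs.

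\textbf{Converse.} Given a Hadamard matrix $H$ of order $n=4t$, we map its rows back via $1\mapsto 0$, $-1\mapsto 1$. Orthogonality of two rows means they agree in exactly $n/2$ positions, so the rows give $n$ binary words of length $n$ pairwise at distance $n/2$, that is, an $E(n,n/2,n)$. No normalization is even needed. Normalizing $H$ to have first row all ones, then deleting that row, recovers the $(2t,t)$-design picture, which matches Theorem \ref{equivalence}.

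The main point to check is that \emph{equivalent} is meaningful in both directions: the two maps are mutually inverse up to the standard equivalences. Code equivalence is coordinate permutation plus translation by a fixed vector, which is column negation on $H$. Hadamard equivalence additionally allows row negations and permutations. Row negation corresponds to replacing a codeword by its complement; this preserves distance $n/2$ to all other codewords since $n - n/2 = n/2$. I expect this bookkeeping to be the only delicate step. The distance and orthogonality computation is routine: $\langle u,v\rangle = n - 2\,d(u,v)$.
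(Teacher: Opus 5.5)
Your proposal is correct and takes essentially the paper's approach: map the codewords to $\pm1$ vectors (your sign convention $0\mapsto 1$ versus the paper's $0\mapsto -1$ is immaterial) and use $\langle u,v\rangle = n-2\,d(u,v)$ to see that distance $n/2$ is the same as orthogonality, in both directions. The detour through the $(2t,t)$-design is correct but unnecessary, as you note yourself, and the announced 3-design/SBIBD aim is never used.
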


\begin{proof}
Suppose we have an $E(n,n/2,n)$. Replace all $0$'s by $-1$'s. Then the resulting $n$ by $n$ matrix is easily seen to be a Hadamard matrix of order $n$. 
Conversely,  a Hadamard matrix of order $n$ yields an $E(n,n/2,n)$ if we replace all $-1$'s
 by $0$'s.
 \end{proof}
 \begin{remark}
 {\rm Similar arguments can be found in Cordes \cite[Theorem 3]{Cordes} and Marrero and Butson \cite[Theorem 3.2]{MB}.
 }
 \end{remark}

The cases  $n \equiv 1,2 \bmod 4$ are more difficult, but they can  be handled using results of Marrero and Butson \cite{MB,Marrero}, 
who studied ``pseudo $(v,k,\lambda)$-designs'' in the 1970's. A \emph{pseudo $(v,k,\lambda)$-design} is a set system having $n$ points, $n-1$ blocks of size $k$ and constant block intersection $\lambda$. Of course the dual
of a pseudo design is an $(r, \lambda)$-design with $r = k$ and $n = m+1$ (i.e., the number of blocks is one more than the number of points). When $k = 2 \lambda$, they are examples of the designs considered in  Theorem \ref{equivalence}. 

The following important result 
concerning the case $k = 2 \lambda$ is proven by Marrero and Butson in \cite{MB} (see also the survey by Marrero \cite[Theorem 3.2]{Marrero}).

\begin{theorem}
\textup{\cite{MB}}
\label{MB.thm}
\mbox{\quad}
\begin{enumerate}
\item A pseudo $(4\lambda ,2\lambda,\lambda)$-design is equivalent to a
$(4\lambda -1,2\lambda-1,\lambda-1)$-SBIBD.
\item A pseudo $(v ,2\lambda,\lambda)$-design with $v \neq 4 \lambda$ is equivalent to a
$\left(v,\frac{v-D}{2},\frac{v-D}{2} - \lambda \right)$-SBIBD, where 
\begin{equation}
\label{D.eq}
D^2 = v^2 - 4\lambda(v-1).
\end{equation}
\end{enumerate}
\end{theorem}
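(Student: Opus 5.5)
The plan is to prove both parts by linear algebra on the incidence matrix, with the equivalence realised through symmetric differences. Let $P$ be the $v\times(v-1)$ point-block incidence matrix of a pseudo $(v,2\lambda,\lambda)$-design. Then $P^TP=\lambda(I+J)$, which is nonsingular, so $P$ has rank $v-1$.

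\emph{The easy directions.}

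For part 2, start with a $(v,k',\lambda')$-SBIBD with $k'-\lambda'=\lambda$, and fix a block $B_0$. Form the $v-1$ sets $B\,\triangle\,B_0$ for $B\neq B_0$. Each has size $2(k'-\lambda')=2\lambda$. Write $x=|B\cap B'\cap B_0|$. Then
\[
|(B\triangle B_0)\cap(B'\triangle B_0)| \;=\; |B\cap B'\setminus B_0|+|B_0\setminus(B\cup B')| \;=\; (\lambda'-x)+(k'-2\lambda'+x) \;=\; k'-\lambda' \;=\; \lambda .
\]
So we get a pseudo $(v,2\lambda,\lambda)$-design. The condition $\lambda'(v-1)=k'(k'-1)$ with $k'-\lambda'=\lambda$ gives $k'^2-vk'+\lambda(v-1)=0$. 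Hence $k'=(v\pm D)/2$ with $D^2=v^2-4\lambda(v-1)$, and after complementing if necessary, $k'=(v-D)/2$.

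For part 1, adjoin a new point to every block of a $(4\lambda-1,2\lambda-1,\lambda-1)$-SBIBD. This gives $4\lambda$ points and $4\lambda-1$ blocks of size $2\lambda$ meeting pairwise in $\lambda$ points.

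\emph{The converse.}

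Here I would reverse the construction: find a $0/1$ vector $y$ (a set $B_0$) such that the blocks $B_i\triangle B_0$, together with $B_0$ itself, form a symmetric design.

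Pass to $\pm1$ form, replacing $0$ by $-1$; let $M=J-2P$. The symmetric difference operation becomes multiplication by $\mathrm{diag}(\sigma)$, where $\sigma=\mathbf 1-2y$. This leaves all inner products among the rows unchanged. Consequently, the only conditions left to check are the following.
\begin{itemize}
\item[(i)] The new square $0/1$ matrix has constant row sums.
\item[(ii)] The new matrix has constant column sums.
\end{itemize}
Once these hold, Ryser's Theorem \ref{Ryser}, applied to the dual square $(r,\lambda)$-design, gives a symmetric BIBD.

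Condition (i) translates to $P^Ty$ being constant, i.e.\ $y\in\mathrm{span}(\mathbf 1,w)$, where $w$ spans the one-dimensional left kernel of $P$. So everything hinges on understanding $w$.

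\emph{Computing $w$.}

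Use the projection $Q=P(P^TP)^{-1}P^T$, with $(P^TP)^{-1}=\lambda^{-1}(I-J/v)$. Then
\[
I-Q=\frac{ww^T}{\|w\|^2},\qquad Q=\frac1\lambda\Bigl(PP^T-\frac1v\,\mathbf k\mathbf k^T\Bigr),
\]
where $k_i$ is the replication number of point $i$. Comparing diagonal entries gives
\[
\frac{w_i^2}{\|w\|^2}=1-\frac{k_i(v-k_i)}{\lambda v}.
\]
The off-diagonal entries give $w_iw_j$ in terms of $(PP^T)_{ij}$ and $k_ik_j$.

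\emph{The main obstacle.}

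The step I expect to be hardest is the integrality argument. I must show that $w$, suitably scaled and shifted by a multiple of $\mathbf 1$, becomes a $0/1$ vector $y$. Equivalently, $w$ takes only two values, matched with the replication numbers.

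My first attempt would use the quadratic relation above together with the off-diagonal relations and the fact that $PP^T$ and $\mathbf k$ are integral. From these I would try to force the $k_i$ into at most two classes, in such a way that the resulting $|B_0|$ and the new column sums are constant, namely $(v-D)/2$. The discriminant $D^2=v^2-4\lambda(v-1)$ should emerge here, as the condition that the quadratic in $k_i$ has integer roots.

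In the degenerate case $v=4\lambda$, I expect the analysis to show instead that $w$ is concentrated on a single point, and that this point has $k_i=v-1$, i.e.\ it lies in every block. Deleting it then yields the $(4\lambda-1,2\lambda-1,\lambda-1)$-SBIBD, which proves part 1.
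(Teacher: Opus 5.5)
The paper does not prove this theorem; it quotes it from Marrero--Butson. Your argument therefore has to stand on its own, and as written the hard direction is missing.

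\textbf{Part 2.} Your easy directions are correct, and your reduction is sound. Condition (ii) is even redundant, since Ryser's theorem only needs the $v$ new blocks to have equal size. Made precise, (i) says $|B_0|=|B_i\triangle B_0|$, i.e.\ $P^Ty=\lambda\mathbf{1}$. Since $P^T\mathbf{1}=2\lambda\mathbf{1}$, this forces $y=\tfrac12\mathbf{1}+tw$, so what must be shown is that $|w_i|$ is constant, not merely that $w$ is two-valued. That is the entire content of the theorem. You only hope that an unspecified integrality argument will force it, and integrality is not the lever.

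The missing idea is that the kernel vector is explicit. Since $\sum_{i\in B_j}k_i=\sum_l|B_j\cap B_l|=2\lambda+(v-2)\lambda=\lambda v$, the vector $w=v\mathbf{1}-2\mathbf{k}$ satisfies $w^TP=0$. Also $w\neq 0$: otherwise every $k_i=v/2$, and counting incidences gives $(v-2\lambda)^2=4\lambda(\lambda-1)$, which is impossible for $\lambda\ge 2$. Substituting $w_i^2=v^2-4x_i$, where $x_i=k_i(v-k_i)$, into your diagonal identity gives, for every $i$,
\[
x_i\Bigl(\frac{1}{\lambda v}-\frac{4}{\|w\|^2}\Bigr)=1-\frac{v^2}{\|w\|^2}.
\]
If the coefficient vanished, then $\|w\|^2=4\lambda v$ and the right side would be $1-v/(4\lambda)\neq 0$, a contradiction. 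Hence for $v\neq 4\lambda$ all $x_i$ are equal. Summing your identity over $i$ then gives $x_i=\lambda(v-1)$, i.e.\ $w_i=\pm D$, so $y=\tfrac12(\mathbf{1}+w/D)$ is a $0/1$ vector. Ryser's theorem now gives a $(v,k',k'-\lambda)$-SBIBD with $k'=|B_0|$. The SBIBD relation forces $k'=(v\pm D)/2$, and you complement if necessary.

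Equivalently, in $\pm1$ form, $H=[\mathbf{1}\mid M]$ satisfies $H^TH=4\lambda I+(v-4\lambda)J$. Hence $HH^T=4\lambda I+\frac{v-4\lambda}{D^2}(H\mathbf{1})(H\mathbf{1})^T$ with $H\mathbf{1}=w$, and the constant diagonal $v$ of $HH^T$ forces $w_p^2=D^2$.

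\textbf{Part 1.} Your prediction is false. When $v=4\lambda$ the relation above is vacuous ($H$ is just a Hadamard matrix), and $w$ need not be concentrated on one point. Take Sylvester's normalized Hadamard matrix of order $8$, negate columns $2$ and $3$, and let the $-1$ entries of columns $2,\dots,8$ define the blocks. This is a pseudo $(8,4,2)$-design with $w=v\mathbf{1}-2\mathbf{k}=(4,0,0,4,-4,0,0,4)$, so no point lies in every block, or in none. Even the pseudo $(4,2,1)$-design $\{1,2\},\{1,3\},\{2,3\}$ on $\{1,2,3,4\}$ has $w$ supported on a point with $k_i=0$, not $v-1$.

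The fix is a preliminary complementation. When $v=4\lambda$, replacing a single block by its complement preserves the pseudo $(4\lambda,2\lambda,\lambda)$ property: the size stays $2\lambda$ and the intersections become $2\lambda-\lambda=\lambda$. So fix a point $p$, complement every block missing $p$, delete $p$, and apply Ryser's theorem to get the $(4\lambda-1,2\lambda-1,\lambda-1)$-SBIBD. Part 1 is therefore an equivalence of existence, up to these complementations. It is not obtained by deleting a point already common to all blocks.
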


\begin{remark}
{\rm We can assume without loss of generality that $D \geq 0$ in Theorem \ref{MB.thm}, replacing $D$ by $-D$ if $D < 0$.}
\end{remark}
 
 Part 1 of Theorem \ref{MB.thm} provides another solution to  the case $n \equiv 0 \bmod 4$, which we have already discussed in Lemma \ref{0mod4}.
 Part 2 of Theorem \ref{MB.thm} is relevant to the cases $n \equiv 1,2 \bmod 4$. Marrero and Butson did not explicitly compute the parameters of the relevant SBIBDs in part 2. We summarize the results of calculations that are found in \cite{SvR} for $n \equiv 1 \bmod 4$ and in \cite{IS95} for $n \equiv 2 \bmod 4$.
 
 First, suppose $n \equiv 1 \bmod 4$. 
An $E(n,(n-1)/2,n)$ is the same thing as a  pseudo $(n ,(n-1)/2,(n-1)/4)$-design. By Theorem \ref{MB.thm}, this 
pseudo design is equivalent to a 
$\left(n,\frac{n-D}{2},\frac{n-D}{2} - \frac{n-1}{4} \right)$-SBIBD, where 
\begin{align*}
D^2 &= n^2 - 4\times \frac{n-1}{4}  \times (n-1) \\&= n^2 - (n-1)^2 \\&= 2n-1.\end{align*}
Write $n = 4t+1$; then $D^2 = 8t+1$.
Clearly $D$ must be odd, so write $D = 2u+1$.
Then $4u^2 + 4u + 1 = 8t+1$, so $t = (u^2 + 2)/2$
and $n = 2u^2 + 2u + 1$. We also have
$(n-D)/2 = u^2$ and $(n-D)/2 - (n-1)/4 = u^2 - (u^2 + u)/2 = (u^2-u)/2$.
This proves part 3 of Theorem \ref{step3}.

Now we turn to the case  $n \equiv 2 \bmod 4$. 
An $E(n,(n-2)/2,n)$ is the same thing as a  pseudo $(n ,(n-2)/2,(n-2)/4)$-design.
By Theorem \ref{MB.thm}, this 
pseudo design is equivalent to a 
$\left(n,\frac{n-D}{2},\frac{n-D}{2} - \frac{n-2}{4} \right)$-SBIBD, where 
\begin{align*}
D^2 &= n^2 - 4\times \frac{n-2}{4}  \times (n-1) \\&= n^2 - (n-1)(n-2) \\&= 3n-2.\end{align*}
Write $n = 4t+2$; then $D^2 = 12t+4$.
Therefore $D \equiv 2,4 \bmod 6$, so we have two subcases to consider. 
(In \cite{IS95}, only the case $D \equiv 2 \bmod 6$ is addressed.)

First, suppose that $D = 6u+2$. Then
$t = (D^2-4)/12 = 3u^2 + 2u$. By part 2 of Theorem \ref{MB.thm}, the
pseudo $(4t+2 ,2t,t)$-design is equivalent to a
$\left(12u^2 + 8u + 2,6u^2 + u,3u^2 - u \right)$-SBIBD.

In the second subcase, where $D = 6u-2$, 
we obtain $t = 3u^2 - 2u$. By part 2 of Theorem \ref{MB.thm}, the
pseudo $(4t+2 ,2t,t)$-design is equivalent to a
\begin{center}
$\left(12u^2 - 8u + 2,6u^2 -7u+2,3u^2 - 5u+2 \right)$-SBIBD.
\end{center}

In both cases, the parameters of the SBIBD must satisfy the conditions of the
Bruck-Ryser-Chowla Theorem. For a $(v,k,\lambda)$-SBIBD to exist with $v$ even, $k - \lambda$ must be a perfect square. When $D = 6u+2$, a list of possible parameters of the SBIBD are described in \cite{IS95}, using a recurrence relation. The smallest nontrivial parameter set for which the SBIBD exists is 
$(66,26,10)$ (corresponding to $u = 2$).
When $D = 6u-2$, the smallest parameter set where $k-\lambda$ is a perfect square is when $u = 9$; then $3u^2 - 2u = 225 = 15^2$. The corresponding symmetric BIBD  would be a $(902, 425,200)$-SBIBD; however,  I do not know if this BIBD exists.

\section{Discussion and Related Work}
\label{related}

In cases 3 and 4 of Theorem \ref{step3}, the desired equidistant codes can all be constructed from the hypothesized SBIBDs by applying Corollary \ref{rlam-equiv}, as illustrated in Example \ref{example13}. 
The {difficult} direction of the equivalence proof for {$n \equiv 1,2 \bmod 4$}  is to show that  optimal equidistant codes imply the existence of the stated SBIBDs.
 There are four different proofs in the literature:
\begin{enumerate}
\item The {Marraro-Butson} proof \cite{MB} involving pseudo designs.
\item The {Stinson-van Rees} proof \cite{SvR} for {$n \equiv 1 \bmod 4$}.
\item The {van Lint} proof \cite{vanLint84} for {$n \equiv 1 \bmod 4$}.
\item The {Ionin-Shrikhande} proof \cite{IS95}.
\end{enumerate}
All of these proofs are rather complicated. The Stinson-van Rees proof and the van Lint proof
are probably the shortest, but it is not clear if they can be generalized to cover the cases
{$n \equiv 2 \bmod 4$}.\footnote{I tried to generalize the proof from \cite{SvR} to handle this case, but I encountered  a problem in doing so.
}

The relevance of the work of Marrero and Butson \cite{MB,Marrero} to optimal equidistant codes has apparently not been noticed previously. Perhaps this is not too surprising, as no mention of equidistant codes can be found in \cite{MB,Marrero} and most (if not all) papers on equidistant codes do not reference \cite{MB,Marrero}. The one exception of which I am aware is the 1979 paper of McCarthy and Vanstone \cite{McV} on ``regular pairwise balanced designs.'' The McCarthy-Vanstone  paper does not study optimal equidistant codes, but it does mention the equivalence of equidistant codes and $(r,\lambda)$ designs. It also cites \cite{MB}, without going into detail about the contributions of the work by Marrero and Butson.

The 1984 papers by Stinson and van Rees \cite{SvR} and by van Lint \cite{vanLint84} both 
address equidistant codes of length $n \equiv 1 \bmod 4$.
John van Rees and I were motivated to study this case due to almost tight upper and lower bounds on 
$A(n,d^*)$ for $n \equiv 1 \bmod 4$ that were proven in \cite[Theorem 4.8]{MRS} in the context of the Cordes problem, which is described below 
(we were not aware at that time of the work of Marrero and Butson). The paper by van Lint \cite{vanLint84} was an alternate proof that was written as a result of a request by the editors of \emph{Combinatorica}, after we submitted \cite{SvR} for publication in that journal.

In a 1978 paper, Cordes \cite{Cordes} considered resolvable set systems in which each parallel class consists of $m$ blocks of size $n$. When $m = 2$, each parallel class consists of two blocks of size $n$. Cordes required that the number $\sigma(2,n)$ of pairs of points common to two parallel classes is  minimized. He showed that 
\begin{equation}
\label{sigma.eq} \sigma(2,n)  = s (2n - 2s - 2),
\end{equation}
where $s = \lfloor \frac{n}{2} \rfloor$.
When $n$ is even, (\ref{sigma.eq}) asserts that  $\sigma(2,n) = n(n-2)/2$. Equality occurs if and only if each block in one parallel class intersects each block in a different parallel class in $n/2$ points. Suppose we choose one block from each parallel class. Then we have a set of blocks of size $n$, chosen from a set of $2n$ points, such that any two blocks intersect in $n/2$ points. That is, we have a constant weight equidistant code. 

Cordes denoted the maximum number of rounds by $k(2,n)$. 
This is equivalent to maximizing the size of the associated equidistant code.
Indeed, he proved in 
\cite[Theorem 3]{Cordes}
that if $n$ is even, then $k(2,n) = 2n-1$ if and only if a Hadamard matrix of order $2n$ exists.
This is equivalent to part 2 of Theorem \ref{step3}.

Mullin, Roy and Schellenberg \cite{MRS} considered a more general version of the Cordes problem. In \cite[\S 4]{MRS}, rounds consisting of two not necessarily equal sized sets are considered. 
When the sizes of the two sets are $2\ell$ and $2\ell+1$ for an integer $\ell$, their results are relevant to equidistant codes.

Various papers have discussed connections between equidistant codes and projective planes; see, for example, \cite{Deza,JHall,vanLint73}. There are also  papers \cite{Deza,McV,vanLint73} that have proven inequalities involving the block sizes of $(r,\lambda)$-designs.

Work has also been done on $q$-ary equidistant codes; 
see \cite{BZT,FKLW,Semakov-Zinoviev,Sinha}. 
It was shown a long time ago by Semakov and Zinoviev \cite{Semakov-Zinoviev} that optimal $q$-ary equidistant codes (i.e., those whose parameters meet the $q$-ary Plotkin Bound) are equivalent to certain resolvable BIBDs. Related results can be found in \cite{Bassalygo-Zinoviev,Sinha}. 
Optimal $q$-ary equidistant codes with $d = 3$ and $4$ are classified in \cite{BZT}
and some computer searches and enumerations are reported in \cite{BT}.

A recent (2026) paper by Heged\"{u}s \cite{Heg} introduces equidistant codes as a ``new'' problem. The main theorem in \cite{Heg} (Theorem 13) is the Fisher-type (AKA Delsarte-type) inequality that we stated as  Corollary \ref{cor2}. Another recent derivation of the same inequality is found in \cite{Kom}. 
The paper \cite{Heg} also includes a conjecture (Conjecture 1) at the end of their paper. However, the truth of this conjecture follows immediately from Fu {\it et al.} \cite[Corollary 2]{FKLW}, for example; it is just an application of the $q$-ary Plotkin Bound. See \cite{HHY} for a discussion of similar results.

\end{document}